\documentclass{amsart}

\usepackage{amsmath,amssymb}
\usepackage{graphicx}
\usepackage{epstopdf}
\usepackage{caption}
\usepackage{subcaption}
\usepackage{xcolor}
\usepackage{color}
\usepackage{sidecap}
\usepackage{pagecolor}
\usepackage{tikz}
\usepackage{colortbl} 
\usepackage{pdfpages}
\usepackage{amsthm}
\usepackage{hyperref}

\usepackage{marginnote}

\numberwithin{equation}{section}

\newtheorem{theorem}{Theorem}[section]

\newtheorem{proposition}[theorem]{Proposition}

\newtheorem{corollary}[theorem]{Corollary}
\newtheorem{definition}[theorem]{Definition}
\newtheorem{remark}[theorem]{Remark}

\newcommand{\R}{\mathbb{R}}
\newcommand{\C}{\mathbb{C}}

\newcommand{\N}{\mathbb{N}}

\newcommand{\D}{\mathcal{D}}

\newcommand{\mc}[1]{\mathcal{#1}}
\renewcommand{\epsilon}{\varepsilon}
\newcommand{\bn}{\mathbf{n}}

\newcommand*{\mailto}[1]{\href{mailto:#1}{\nolinkurl{#1}}}
\newcommand{\non}{\nonumber \\}

\renewcommand{\d}{\mathrm{d}}

\begin{document}
\title{A two-component nonlinear variational wave system}

\allowdisplaybreaks

\author[P. Aursand]{Peder Aursand}
\address{Aker BP ASA}

\author[A. Nordli]{Anders Nordli}
\address{Engineering Science and Safety\\ UiT\\ UiT The Arctic University of Norway\\ Troms\o \\ Norway}
\email{\mailto{anders.s.nordli@uit.no}}

\date{\today}

\begin{abstract}
We derive a novel two-component generalization of the nonlinear variational wave equation as a model for the director field of a nematic liquid crystal with a variable order parameter. The two-component nonlinear variational wave equation admits solutions locally in time. We show that a particular long time asymptotic expansion around a constant state in a moving frame satisfy the two-component Hunter--Saxton system.
\end{abstract}

\maketitle

\section{Introduction}
The nonlinear variational wave equation $\psi_{tt}-c(\psi)(c(\psi)\psi_x)_x=0$ was derived by Saxton \cite{S} as a model of the director field of a nematic liquid crystal. The nonlinear variational wave equation has recieved wide attention \cite{BCZ,BZ,GHZ,HR} due to mathematical challenges in the form of wavebreaking in finite time, and distinct ways to extend the solution to a global weak solution. In this context wavebreaking means that either the time or the space derivative becomes unbounded at certain points, while the solution remains H\" older continuous.

Furthermore, Hunter and Saxton derived the equation $u_{tx}+(uu_x)_x=\frac12u_x^2$ from the nonlinear variational wave equation \cite{HS} as an asymptotic equation for small perturbations in the long time regime in a moving frame. The Hunter--Saxton equation share many of the features of the nonlinear variational wave equation such as wavebreaking, conservative and dissipative weak solutions, and H\" older continuity \cite{BC,BHR,BZZ,CGH,D,HZ2,HZ3,ZZ,ZZ2}. In addition it exhibited novel features such as complete integrability and interpretation as a geodesic flow \cite{HZ,KM}. It also proved  easier to work with due to there being only one family of characteristics and existence of explicit solutions.

A two-component generalization for the Hunter--Saxton equation was derived from two-component Camassa--Holm \cite{DP}, and independently form the Gurevich--Zybin system \cite{P}. The two-component generalization is similar to the Hunter--Saxton equation since wavebreaking is also possible, there are both conservative and dissipative weak solutions, and $u$ is H\" older continuous \cite{GN,N,W}. However when the second variable is nonzero almost everywhere initially, there will be no wave breaking \cite{N}, and in that sense the introduction of a second variable regularizes the equation. The two-component Hunter--Saxton system has, however, not been shown to be related to the theory of nematic liquid crystals, and one of the aims of this paper is to establish that the two-component Hunter--Saxton system can indeed be derived from the theory of nematic liquid crystals, and that the new variable is related to the order parameter.

In the Ericksen--Leslie theory of nematic liquid crystals the configuration is described by a director field $\bn$ which gives the local orientation of the rods, and an order parameter field $s$ which gives the local degree of orientation \cite{E,L}. When the nonlinear variational wave equation was first derived the local degree of orientation was assumed constant, so that  a scalar equation was obtained \cite{S}. Here we will follow a similar route, but account for variable degree of orientation. Thus, in Section \ref{section:derivation}, we will arrive at a novel two-component system of nonlinear wave equations. Moreover, we show that a similar asymptotic expansion to the one by Hunter and Saxton \cite{HS} yields the two-component Hunter--Saxton equation. In Section \ref{section:analysis} existence of global solutions is shown in the case of constant wave speed, and local solutions is shown to exist in general. The proofs rely on fixed point iterations and standard semigroup theory for evolution equations \cite{K}. 

\section{Derivation of the equations} \label{section:derivation}
In Ericksen--Leslie theory a nematic liquid crystal in a domain $\Omega$ is desribed by a director field $\bn: [0,T]\times\Omega\rightarrow \R\mathbb{P}^2$ and an order parameter $s:[0,T]\times\Omega\rightarrow (-\frac12,1)$ \cite{E,L}. The Lagrangian of the system is then \cite{E} given by
\begin{equation}
\mc L = -(s\bn)_t^2+ W_2(s,\nabla s,\bn,\nabla\bn) + W_0(s),
\end{equation}
with the potential energy term $W_2$ 
\begin{align}
W_2(s,\nabla s,\bn,\nabla\bn) &= \left(K_1+L_1s\right)s^2\left(\nabla\cdot\bn\right)^2 + \left(K_2+L_2s\right)s^2\left(\bn\cdot\nabla\times\bn\right)^2\non
	&\quad+ \left(K_3+L_3s\right)s^2\left|\bn\times\nabla\times\bn\right|^2 \non 
	&\quad+ \left((K_2+K_4)+(L_2+L_4)s\right)s^2\left[\mathrm{tr}\:\nabla\bn^2-(\nabla\cdot\bn)^2\right]\non
	&\quad+ \left(\kappa_1+\lambda_1s\right)\left|\nabla s\right|^2 + \left(\kappa_2+\lambda_2s\right)\left(\nabla s\cdot\bn\right)^2 \non
	&\quad+ \left(\kappa_3+\lambda_3s\right)\left(\nabla\cdot\bn\right)\left(\nabla s\cdot\bn\right) + \left(\kappa_4+\lambda_4s\right)\nabla s\cdot\left((\bn\cdot\nabla)\bn\right),
\end{align}
and some function $W_0$ satisfying 
\begin{equation}
\label{eq:W_0 cond}
	\lim_{s\downarrow-\frac12}W_0(s)=+\infty,\quad \lim_{s\uparrow 1}W_0(s)=+\infty, \text{ and } \lim_{s\rightarrow 0}\frac{W_0(s)}{s^2} \in\R.
\end{equation}
We are interested in the case where $\bn=(\cos\psi,\sin\psi,0)$, and $s$ and $\psi$ depends on $t$ and $x$ only. 
Then the expression for $W_2$ can be written
\begin{align}
W_2 &= s^2\psi_x^2\left((K_1+L_1s)\sin^2\psi+(K_3+L_3s)\cos^2\psi\right)\non
	&\quad + s_x^2\left((\kappa_1+\lambda_1s)\sin^2\psi+(\tilde\kappa_2+\tilde\lambda_2s)\cos^2\psi\right)\non
	&\quad -s_x\psi_x\sin 2\psi\left(\frac{\kappa_3+\kappa_4}2+\frac{\lambda_3+\lambda_4}2s\right),
\end{align}
where $\tilde\kappa_2=\kappa_1+\kappa_2$ and $\tilde\lambda_2 = \lambda_1+\lambda_2$. We simplify the problem by considering the case where
\begin{subequations}
\begin{align}
\kappa_3 &= \kappa_4 = \lambda_3 = \lambda_4 = 0,\\
L_1 &= L_3 = \lambda_1 = \tilde\lambda_2 = 0,\\
K_1&= \kappa_1,\\
K_3 &= \tilde\kappa_2.
\end{align}
\end{subequations}
Then with the definition $c(\psi)^2= K_1\sin^2\psi + K_3\cos^2\psi$, we get the Lagrangian density
\begin{align}
\label{eq:2NVW lagr}
\mc L^{2NVW} &= -\frac 12s_t ^2 -\frac 12s^2\psi_t^2+\frac12s^2c(\psi)^2\psi_x^2+\frac 12c(\psi)^2s_x^2 + W_0(s)\non
	&= -\frac 12(s\bn)_t^2+\frac12 c(\bn)^2(s\bn)_x^2 + W_0(s).
\end{align}
\begin{definition}
We define the two-component nonlinear variational wave system to be
\begin{subequations}
\label{eq:2NVW}
\begin{align}
	s^2\left(\psi_{tt}-c(\psi)\left(c(\psi)\psi_x\right)_x\right)+2s\left(\psi_ts_t-c(\psi)^2\psi_xs_x\right)+c(\psi)c'(\psi)  s_x^2 &= 0,\\
	s_{tt} - c(\psi)\left(c(\psi) s_x\right)_x -c(\psi)c'(\psi)\psi_xs_x -s\left(\psi_t^2-c(\psi)^2\psi_x^2\right) + W_0'(s) &=0,\\
	c(\psi)^2 - K_1\sin^2\psi-K_3\cos^2\psi &= 0,\\
	W_0 &\in C^4\left(\left(-\frac12,1\right)\right).
\end{align}
\end{subequations}
\end{definition}
\begin{remark}
The system \eqref{eq:2NVW} is the Euler--Lagrange equations for \eqref{eq:2NVW lagr}.
\end{remark}
One can define a conserved energy for classical solutions.
\begin{proposition}
Define the energy density 
\begin{equation}
	\label{def:E}
	\mc E = \frac12\left(s^2(\psi_t^2+c(\psi)^2\psi_x^2)+(s_t^2+c(\psi)^2s_x^2)\right)+W_0(s).
\end{equation}
and the energy density flux
\begin{equation}
	\label{def:F}
	\mc F = \left(s^2\psi_t\psi_x+s_ts_x\right).
\end{equation}
Then for classical solutions of \eqref{eq:2NVW} the energy satisfy the equations
\begin{subequations}
\label{eq:cons E}
\begin{align}
	\mc E_t-\left(c(\psi)^2\mc F\right)_x &= 0,\\
	\mc F_t - \left(\mc E-2W_0(s)\right)_x &= 0.
\end{align}
\end{subequations}
\end{proposition}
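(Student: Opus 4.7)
The plan is to recognize these as the two conservation laws arising from the invariance of the Lagrangian density $\mc L^{2NVW}$ under translations in $t$ and in $x$ respectively, and to verify them by directly combining the two Euler--Lagrange equations with the appropriate multipliers. Concretely, I will show
\begin{equation*}
\mc E_t-(c(\psi)^2\mc F)_x = \psi_t\cdot(\psi\text{-equation}) + s_t\cdot(s\text{-equation}),
\end{equation*}
and
\begin{equation*}
\mc F_t-(\mc E-2W_0(s))_x = \psi_x\cdot(\psi\text{-equation}) + s_x\cdot(s\text{-equation}),
\end{equation*}
so that, on classical solutions, both right-hand sides vanish.

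First I would expand $c(\psi)(c(\psi)\psi_x)_x = c(\psi)c'(\psi)\psi_x^2+c(\psi)^2\psi_{xx}$ and $c(\psi)(c(\psi)s_x)_x = c(\psi)c'(\psi)\psi_xs_x+c(\psi)^2s_{xx}$, rewriting the system in a form where every derivative of $c$ is explicit. This makes the subsequent bookkeeping of $cc'$-terms transparent, which I expect to be the only nuisance in the calculation.

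For the first identity, I would compute $\mc E_t$ term by term using the product and chain rules (noting that $\partial_t c(\psi)^2 = 2c c'\psi_t$ and $\partial_t W_0(s) = W_0'(s)s_t$), and do the same for $(c^2\mc F)_x$ (using $\partial_x c^2 = 2cc'\psi_x$). Then multiplying the $\psi$-equation by $\psi_t$ and the $s$-equation by $s_t$ and adding, every $cc'$-cross term, every $ss_t\psi_t^2$-type term, and every second-order derivative term should match precisely what appears in $\mc E_t-(c^2\mc F)_x$. The second identity is established in exactly the same fashion, with $\psi_x,s_x$ as multipliers; the appearance of $\mc E-2W_0(s)$ rather than $\mc E$ on the flux side corresponds to the fact that $W_0'(s)s_x$ arises on the equation side without a compensating potential-energy contribution from $\mc F_t$, so the $W_0(s)$ term in $\mc E$ must be subtracted twice.

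The main potential pitfall is sign and coefficient tracking for the $cc'$-terms, which appear in the PDEs, in $\partial_t c^2$, and in $\partial_x c^2$; a careful grouping of terms by their derivative structure ($\psi_{tt}$, $\psi_{xx}$, $s_{tt}$, $s_{xx}$, mixed first-order products, and the $W_0'$ contribution) handles this cleanly. As a sanity check I would verify dimensional/structural consistency by specializing to $s\equiv\text{const}$, where the system reduces to the scalar nonlinear variational wave equation and the identities \eqref{eq:cons E} collapse to the well-known conservation laws in \cite{BZ,HR}. No estimates are needed since the statement is for classical solutions; the proof is a direct algebraic identity.
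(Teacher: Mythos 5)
Your proposal is correct: both identities do reduce to $\psi_t\cdot(\psi\text{-eq})+s_t\cdot(s\text{-eq})$ and $\psi_x\cdot(\psi\text{-eq})+s_x\cdot(s\text{-eq})$ respectively (I checked the bookkeeping of the $cc'$- and $ss_t\psi_t^2$-type terms, and they cancel as you predict), and this multiplier computation is exactly the standard Noether-type argument the paper implicitly relies on, since it states the proposition without giving any proof. Nothing further is needed.
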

We will now derive the two-component Hunter--Saxton system from \eqref{eq:2NVW lagr}. To follow the work of Hunter and Saxton \cite{HS} we introduce $\psi(t,x) = \psi_0 + \epsilon u(\epsilon t,x-c_0t)$ and $s = s_0+\epsilon r(\epsilon t,x-c_0t)$ with $c_0^2 = K_1\sin^2\psi_0 + K_3\cos^2\psi_0$. Then expansion of \eqref{eq:2NVW lagr} in powers of $\epsilon$ gives
\begin{align}
\mc L^\epsilon &= W_0(s_0) + \epsilon W_0'(s_0)r + \epsilon^2\frac12W_0''(s_0)r^2\non
&\quad + \epsilon^3\bigg(c_0r_tr_x+s_0^2c_0u_tu_x + s_0^2\left(cc'\right)_0 uu_x^2 + \left(cc'\right)_0 ur_x^2 + \frac16W_0'''(s_0)r^3\bigg)+ O(\epsilon^4).
\end{align}
If we select $s_0$ such that $W_0'(s_0) = W_0''(s_0) = W'''(s_0) = 0$, the third order terms in the above Lagrangian gives (possibly by rescaling)
\begin{equation}
\label{eq:2HS lagr}
\mc L^{2HS} = u_tu_x +uu_x^2 + r_tr_x + ur_x^2.
\end{equation}
\begin{theorem}
	The two-component Hunter--Saxton system
	\begin{subequations}
	\begin{align}
	(u_t+uu_x)_x &= \frac 12u_x^2+\frac 12\rho^2,\\
	\rho_t + (u\rho)_x &= 0,
	\end{align}
	\end{subequations}
	is the Euler--Lagrange equations for the Lagrangian density \eqref{eq:2HS lagr} with $\rho=r_x$.
\end{theorem}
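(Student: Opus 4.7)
The plan is to compute the Euler--Lagrange equations of $\mathcal{L}^{2HS}=u_tu_x+uu_x^2+r_tr_x+ur_x^2$ directly, treating $u$ and $r$ as the two independent field variables and $(t,x)$ as the independent variables. Since $\mathcal{L}^{2HS}$ depends on $(u,u_t,u_x,r,r_t,r_x)$ only (no higher derivatives), each Euler--Lagrange equation has the standard form
\begin{equation*}
\frac{\partial \mathcal{L}^{2HS}}{\partial \phi}-\frac{\d}{\d t}\frac{\partial\mathcal{L}^{2HS}}{\partial \phi_t}-\frac{\d}{\d x}\frac{\partial \mathcal{L}^{2HS}}{\partial \phi_x}=0,\qquad \phi\in\{u,r\}.
\end{equation*}
I would then read off the partial derivatives for the two equations separately.

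For the variation in $u$ the relevant partials are $\partial_u\mathcal{L}^{2HS}=u_x^2+r_x^2$, $\partial_{u_t}\mathcal{L}^{2HS}=u_x$, and $\partial_{u_x}\mathcal{L}^{2HS}=u_t+2uu_x$. Applying the total derivatives and collecting terms gives
\begin{equation*}
-2u_{tx}-2(uu_x)_x+u_x^2+r_x^2=0,
\end{equation*}
which, after dividing by $-2$, using $(uu_x)_x=u_x^2+uu_{xx}$ and recognizing $(u_t+uu_x)_x=u_{tx}+(uu_x)_x$, rearranges to $(u_t+uu_x)_x=\tfrac12 u_x^2+\tfrac12 r_x^2$. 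Substituting $\rho=r_x$ yields the first equation of the two-component Hunter--Saxton system.

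For the variation in $r$ the partials are $\partial_r\mathcal{L}^{2HS}=0$, $\partial_{r_t}\mathcal{L}^{2HS}=r_x$, and $\partial_{r_x}\mathcal{L}^{2HS}=r_t+2ur_x$. The Euler--Lagrange equation therefore reduces to $-2r_{tx}-2(ur_x)_x=0$, i.e.\ $(r_x)_t+(u r_x)_x=0$, which with $\rho=r_x$ is precisely $\rho_t+(u\rho)_x=0$.

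The computation is essentially mechanical, so there is no real obstacle; the only mild subtlety to flag is that the $r$--equation is a conservation law for $r_x$ rather than a PDE for $r$ itself, which is exactly what makes the change of variable $\rho=r_x$ natural and allows the full two-component Hunter--Saxton system to be recovered in the stated form.
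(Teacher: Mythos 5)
Your computation is correct: both Euler--Lagrange equations come out exactly as stated, and the substitution $\rho=r_x$ is handled properly. The paper states this theorem without proof, and your direct variational computation is precisely the routine verification the authors left to the reader, so there is nothing to compare or correct.
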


\section{The two-component nonlinear variational wave system} \label{section:analysis}
We will first consider the case $K_1=K_3$, that is the wave speed $c$ is independent of $\psi$. We will assume $s \geq 0$ such that we can introduce the complex variable $\zeta = se^{i\psi}$, and \eqref{eq:2NVW} reduces to
\begin{equation}
\label{eq:2NVWlinear}
\zeta_{tt}-c^2\zeta_{xx} +\frac{W_0'\left(|\zeta|\right)}{|\zeta|}\zeta = 0,
\end{equation}
which is a defocusing nonlinear wave equation. Recall that the physical interpretation of $\psi$ is an angle, and thus any solutions $\psi$ and $\psi + n\cdot 2\pi$ should be considered equal from an application point of view. We have conservation laws for energy $\mc E = \frac12|\zeta_t|^2+\frac12c^2|\zeta_x|^2+W_0(|\zeta|)$ and energy flux $\mc F = \frac12\left(\bar\zeta_t\zeta_x+\zeta_t\bar\zeta_x\right)$ as follows
\begin{subequations}
\begin{align}
	\frac{\partial}{\partial t}\mc E - c^2\frac{\partial}{\partial x}\mc F &= 0,\\
	\frac{\partial}{\partial t}\mc F - \frac{\partial}{\partial x}\left(\mc E-2W_0(|\zeta|)\right) &= 0.
\end{align}
\end{subequations}
Hence both $\int_\R\mc E\:\d x$ and $\int_\R\mc F\:\d x$ are conserved for any classical solution with bounded energy. To analyze solutions we will need some conditions on the function $W_0$. We will assume that $W_0(s)\rightarrow\infty$ rapidly enough as $s\rightarrow1$ to be able to bound $\|W_0(|\zeta|)\|_\infty$ in terms of the total energy $E$. In addition we will assume that $W_0$ is non-negative and that $W_0$ is well behaved close to $s=0$, and also close to zeros of $W_0$. We will also relax \eqref{eq:W_0 cond} slightly to allow for $s=0$ to be a local maximum or minimum instead of a global minimum, in accordance with \cite{E}.
\begin{definition}
\label{def:W_0}
We will assume that $W_0\in C^4([0,1))$ and non-negative, and that there exists a finite number of $s^*$ such that $W_0(s^*)=0$, and for all zeros $s^*$,
\begin{align}
	\lim_{s\rightarrow 0}\frac{W_0(s)-W_0(0)}{s^2} &= \frac 12W_0''(0)\in\R,\\
	\lim_{s\rightarrow s^*}\frac{W_0(s)}{(s-s^*)^2} &= \frac12W_0''(s^*)\in [0,\infty).
\end{align}
Moreover, we let $W_0$ satisfy
\begin{equation}
	\int_s^1W_0(u)(1-u)\:d u = \infty,
\end{equation}
and that there exists $\tilde s\in[0,1)$ such that $W_0(s)>0$ and $W_0'(s)>0$ for all $s\in (\tilde s,1)$. 
\end{definition}
We define the function spaces for the solutions in the next definition.
\begin{definition}
\label{def:spaces}
	Let $\zeta^*\in\C$, $|\zeta^*|<1$, and
	\begin{equation}	
		X_{\zeta^*} = \{(\zeta,\sigma)\mid \zeta \in W^{1,\infty}(\R),\zeta-\zeta^*\in H^1(\R), \sigma \in L^\infty(\R)\cap L^2(\R)\},
	\end{equation}	
	and define
	\begin{align}
		\mc E(\zeta,\sigma) &= \frac12|\sigma|^2+\frac12|\zeta_x|^2+W_0(|\zeta|),\\
		\mc F(\zeta,\sigma) &= \frac12\sigma\bar\zeta_x+\frac12\bar\sigma\zeta_x,
	\end{align}
	and note that both $\mc E$ and $\mc F$ are real valued. Let now
	\begin{equation}
		X_E = \{(\zeta,\sigma)\in X_{\zeta^*}\mid |\mc E\|_{L^1} \leq E\},
	\end{equation}
	and denote
	\begin{align}
		\D_{T,E} &= \left\{\zeta\in C([0,T],W^{1,\infty}(\R,\C))\cap C^1([0,T],L^\infty(\R,\C) \mid (\zeta(t),\zeta_t(t))\in X_E\right\}.
	\end{align}
	In the case $T<\infty$ we equip $\D_{T,E}$ with the metric $d_{\D_{E,T}}$ induced from 
	\begin{align}
		\|\zeta\| &= \sup_{t\in[0,T]}\big(\|\zeta(t)\|_{W^{1,\infty}(\R)} +\| \zeta(t)-\zeta^*\|_{H^1(\R)}\non &\quad+ \|\zeta_t(t))\|_{L^\infty(\R)} + \| \zeta_t(t))\|_{L^2(\R)} + \|W_0(|\zeta(t)|)\|_{L^1(\R)}\big).
	\end{align}
\end{definition}
For bounded energy we can now a priori put constraints on possible solutions.
\begin{proposition}
\label{prop:W_0}
Let $W_0$ satisfy the conditions of Definition \ref{def:W_0}. Then there exist positive constants $c_E, C_E$, depending on $W_0$ and $E$ only, such that for any $(\zeta,\sigma)\in X_E$ we have
\begin{align}
	\|W_0(|\zeta|)\|_{L^\infty(\R)} &\leq C_E,\non
	\|\zeta\|_{L^\infty(\R)} &\leq c_E < 1.\nonumber
\end{align}
Moreover for $s\in [0,c_E]$ there is a positive constant $k_E$ such that
\begin{equation}
	W_0'(s)^2 \leq k_E W_0(s).
\end{equation}
Furthermore, we can define
\begin{align}
	L_E &= \sup_{s\in[0,c_E]} |W_0'(s)|,\\
	L_E' &= \sup_{s\in[0,c_E]} |\frac{W_0'(s)}{s}|,\\
	L_E'' &= \sup_{s\in[0,c_E]} |W_0''(s)|.
\end{align}
\end{proposition}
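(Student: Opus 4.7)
The plan is to first establish the $L^\infty$ bound $\|\zeta\|_{L^\infty} \leq c_E < 1$; once $|\zeta|$ is constrained to the compact interval $[0,c_E]$, the remaining estimates reduce to continuity arguments for $W_0$ and its derivatives on that interval.

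\textbf{Main estimate.} For $(\zeta,\sigma)\in X_E$ the constraint $\|\mathcal{E}\|_{L^1}\leq E$ yields $\|\zeta_x\|_{L^2}^2\leq 2E$ and $\|W_0(|\zeta|)\|_{L^1}\leq E$, so Cauchy--Schwarz gives the H\"older estimate $|\zeta(x)-\zeta(y)|\leq\sqrt{2E|x-y|}$. Set $M:=\sup_{x\in\R}|\zeta(x)|$. Since $\zeta-\zeta^*\in H^1(\R)$ forces $\zeta(x)\to\zeta^*$ at infinity, either $M\leq|\zeta^*|<1$ (and we are done) or $M>|\zeta^*|$ and the supremum is attained at some $x_0$; I may also assume $M>\tilde s$, else the bound is immediate. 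On the symmetric interval $|x-x_0|\leq L:=(M-\tilde s)^2/(2E)$ the H\"older estimate gives $|\zeta(x)|\geq M-\sqrt{2E|x-x_0|}\geq\tilde s$, and monotonicity of $W_0$ on $(\tilde s,1)$ yields $W_0(|\zeta(x)|)\geq W_0\bigl(M-\sqrt{2E|x-x_0|}\bigr)$. Substituting $u=M-\sqrt{2E(x-x_0)}$, so that $dx=-(M-u)/E\,du$, and exploiting symmetry about $x_0$ produces
\begin{equation*}
E \;\geq\; \int_{x_0-L}^{x_0+L} W_0(|\zeta|)\,dx \;\geq\; \frac{2}{E}\int_{\tilde s}^{M} W_0(u)(M-u)\,du,
\end{equation*}
hence $\int_{\tilde s}^{M} W_0(u)(M-u)\,du\leq E^2/2$. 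Since $\int_{\tilde s}^{1} W_0(u)(1-u)\,du=\infty$ by Definition \ref{def:W_0}, and the left-hand side is increasing and continuous in $M$, there is a largest $c_E=c_E(W_0,E)\in(\tilde s,1)$ compatible with this inequality, giving $M\leq c_E<1$.

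\textbf{Remaining bounds and main difficulty.} With $|\zeta|\leq c_E<1$ in hand, continuity of $W_0\in C^4([0,1))$ on the compact interval $[0,c_E]$ gives $\|W_0(|\zeta|)\|_{L^\infty}\leq\max_{[0,c_E]}W_0=:C_E$, and $L_E$, $L_E''$ are finite as suprema of continuous functions on $[0,c_E]$. The limit hypothesis at $s=0$ forces $W_0'(0)=0$ and gives $W_0'(s)/s\to W_0''(0)$ as $s\to 0$, so $W_0'(s)/s$ extends continuously to $[0,c_E]$ and $L_E'$ is finite. For the inequality $W_0'(s)^2\leq k_E W_0(s)$, I split $[0,c_E]$ into small neighborhoods of the finitely many zeros of $W_0$ and their complement; on the complement $W_0$ is bounded below by a positive constant and $W_0'$ is continuous, so the ratio $W_0'(s)^2/W_0(s)$ is bounded. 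Near each zero $s^*$ the minimum property forces $W_0'(s^*)=0$, and the quadratic behavior in Definition \ref{def:W_0} combined with $C^4$ regularity yields $W_0'(s)^2/W_0(s)\to 2W_0''(s^*)$ as $s\to s^*$ (the degenerate case $W_0''(s^*)=0$ is handled by one further Taylor expansion using $W_0\geq 0$, which forces the cubic term to vanish), so the ratio is locally bounded. Patching these two regimes produces $k_E$. The principal obstacle is the main estimate: the crucial insight is that the parabolic lower envelope $M-\sqrt{2E|x-x_0|}$ is exactly the substitution that converts the $L^2$ gradient bound into the weighted integral $\int W_0(u)(1-u)\,du$ of the hypothesis, which is what fixes the precise form of the divergence condition in Definition \ref{def:W_0}.
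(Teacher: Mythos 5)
Your proposal is correct and follows essentially the same route as the paper's proof: the H\"older bound extracted from the energy, the parabolic lower envelope $M-\sqrt{2E|x-x_0|}$ around the point where $|\zeta|$ is maximal, the substitution converting the $L^1$ bound on $W_0(|\zeta|)$ into $\int_{\tilde s}^{M}W_0(u)(M-u)\,\mathrm{d}u$, the divergence hypothesis to force $M\le c_E<1$, and, for $W_0'^2\le k_EW_0$, the limit $2W_0''(s^*)$ at the zeros of $W_0$ combined with continuity and compactness away from them. Your handling of the case where the supremum is not attained (via decay of $\zeta-\zeta^*$ at infinity) and of the finiteness of $L_E'$ is slightly more explicit than the paper's, but the argument is the same.
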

\begin{proof}
Since $\frac12\|\sigma\|_2^2+\frac12 c^2 \|\zeta_x\|_2^2 + \|W_0(|\zeta|)\|_1 \leq E$ we have that $|\zeta(x_1)-\zeta(x_2)|\leq\frac{\sqrt{2E_1}}{c}\sqrt{|x_1-x_2|}$ where $E_1 = E-\frac12\|\sigma\|_2^2-\|W_0(|\sigma|)\|_1 \geq 0$. Since $\zeta$ is continuous we have that there exists $\hat x$ such that $|\zeta(\hat x)| = \|\zeta\|_{L^\infty(\R)}\leq 1$. Then we have that
\begin{equation}
	|\zeta(x)| \geq 
		\begin{cases}
			\|\zeta\|_\infty-\frac{\sqrt{2E_1}}c\sqrt{|x-\hat x|},&\hat x-\frac{\|\zeta\|_\infty^2c^2}{2E_1}<x<\hat x+\frac{\|\zeta\|_\infty^2c^2}{2E_1},\\
			0, &\text{ else}.
		\end{cases}
\end{equation}
In particular, we have that
\begin{align}
	\int_\R W_0(|\zeta(x)|)\:\d x &\geq \int_{\hat x-\frac{c^2}{2E_1}(\|\zeta\|_\infty-\tilde s)^2}^{\hat x+\frac{c^2}{2E_1}(\|\zeta\|_\infty-\tilde s)^2} W_0(|\zeta(x)|)\:\d x\non
		&\geq \int_{\hat x-\frac{c^2}{2E_1}(\|\zeta\|_\infty-\tilde s)^2}^{\hat x+\frac{c^2}{2E_1}(\|\zeta\|_\infty-\tilde s)^2} W_0\left(\|\zeta\|_\infty-\frac{\sqrt{2E_1}}{c}\sqrt{|x-\hat x|})\right)\:\d x\non
		&= \int_{\tilde s}^{\|\zeta\|_\infty}W_0(u)\left(\|\zeta\|_\infty-u\right)\:\d u
\end{align}
By the monotone convergence theorem we have that
\begin{equation}
\lim_{S\rightarrow 1^-}\int_s^S W_0(u)\left(S-u\right)\:\d u = \int_s^1W_0(u)\left(1-u\right)\:\d u = \infty.
\end{equation}
Hence we have the inequality
\begin{equation}
	\int_{\tilde s}^{\|\zeta\|_\infty}W_0(u)\left(\|\zeta\|_\infty-u\right)\:\d u \leq E - E_1,
\end{equation}
which proves the existence of $c_E, C_E$ such that $\|\zeta\|_\infty = c_E < 1$ and $\|W_0(|\zeta|)\|_\infty = W_0(\|\zeta\|_\infty) = C_E<\infty$. The constant can be chosen to depend on $W_0$ and $E$ only.

To prove that $W_0'^2\leq k_E W_0$, note that whenever $W_0$ tends to zero that
\begin{equation}
	\lim_{s\rightarrow s^*}\frac{W_0'(s)^2}{W_0(s)} = 2W_0''(s^*)\in[0,\infty).
\end{equation} 
In between zeros of $W_0$ the fraction $\frac{W_0'^2}{W_0}$ is continuous. Since there is only a finite number of intervals where $W_0$ is zero and $[0,c_E]$ is bounded, $\frac{W_0'^2}{W_0}$ has to be bounded as well by a constant dependent on $c_E$. Since $W_0$ is non-negative we get the desired inequality. 
\end{proof}
\begin{remark}
\label{rmk:W_0}
Note that $\zeta_1$ and $\zeta_2$ in $X_E$ we have that
\begin{align}
	\int_\R W_0(|\zeta_1(x)|)\:\d x &= \int_\R\frac12L_E''|\zeta_1-\zeta^*|^2\:\d x \non
	&\leq \frac12L_E''\|\zeta_1-\zeta^*\|_2^2, 
\end{align}
and similarly, since $W_0(|\zeta|) = \int_{|\zeta^*|}^{|\zeta|}W_0''(u)(|\zeta|-u)\:\d u$,
\begin{align}
	\int_\R|W_0(|\zeta_1|)-W_0(|\zeta_2|)|\:\d x &=\int_\R\Bigg| \int_{|\zeta^*|}^{|\zeta_1|}W_0''(u)\left(|\zeta_1|-|\zeta_2|\right)\:\d u\non&\qquad+\int_{|\zeta_2|}^{|\zeta_1|}W_0''(u)\left(|\zeta_2|-u\right)\:\d u\Bigg|\:\d x \non
		&\leq L_E''\|\zeta_1-\zeta_2\|_2^2 + L_E''\|\zeta_1-\zeta^*\|_2\|\zeta_1-\zeta_2\|_2.
\end{align}
\end{remark}
\begin{theorem}
\label{thm:global}
Let $(\zeta_0,\zeta_{t0})\in X_E$, and assume that $W_0$ satisfies the conditions in Definition \ref{def:W_0}. Then there exists a unique global strong energy conserving solution $(\zeta,\zeta_t)\subseteq \D_{\infty,E}$ of \eqref{eq:2NVWlinear} depending continuously on the initial data $(\zeta_0,\zeta_{t0})\in X_E$.
\end{theorem}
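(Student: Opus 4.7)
The plan is to recast \eqref{eq:2NVWlinear} as a first-order abstract evolution equation for $U = (\zeta - \zeta^*, \zeta_t)^T$, namely $U_t = AU + F(U)$ with linear part $A = \begin{pmatrix} 0 & \Id \\ c^2\partial_x^2 & 0 \end{pmatrix}$ and nonlinearity $F(U) = (0, -W_0'(|\zeta|)\zeta/|\zeta|)$. The operator $A$ generates a $C_0$-group $S(t)$ that is an isometry on $H^1(\R,\C)\times L^2(\R,\C)$ in the energy norm and, by d'Alembert's formula, is bounded on $W^{1,\infty}(\R,\C)\times L^\infty(\R,\C)$ with norm growing at most linearly in $t$. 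These boundedness properties are exactly what is needed to run a fixed point iteration on the Duhamel representation
\[
U(t) = S(t)U_0 + \int_0^t S(t-\tau)F(U(\tau))\,\d\tau
\]
in the norm defining $\D_{T,E}$.

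First I would establish local existence. Proposition~\ref{prop:W_0} ensures that any element of $X_E$ satisfies $\|\zeta\|_\infty \le c_E < 1$ with finite constants $L_E, L_E', L_E''$, so that $F$ is bounded and Lipschitz on $X_E$ in both $L^2$ and $L^\infty$: the bound $|W_0'(s)/s|\le L_E'$ handles the apparent singularity of $F$ at $\zeta = 0$, while $L_E''$ together with the chain rule controls $F(\zeta)_x = F'(\zeta)\zeta_x$. A standard Picard iteration in $\D_{T,E_1}$, for a suitable $E_1 > E$ and $T$ small depending only on $E$ and these constants, produces a unique strong solution on $[0,T]$.

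Next, to pass from local to global, I would invoke energy conservation. Multiplying \eqref{eq:2NVWlinear} by $\bar\zeta_t$ and taking real parts gives $\partial_t\mc E - c^2\partial_x\mc F = 0$ for strong solutions, and integration in $x$ yields $\|\mc E(\zeta(t),\zeta_t(t))\|_{L^1} = \|\mc E(\zeta_0,\zeta_{t0})\|_{L^1} \le E$, so the solution stays in $X_E$. Since the local existence time depends only on $E$, iterating extends the solution uniquely to all $t\ge 0$, so $\zeta \in \D_{\infty,E}$. Continuous dependence follows from the same framework: subtracting the Duhamel formulas for two solutions with initial data in $X_E$, applying Remark~\ref{rmk:W_0} together with the Lipschitz bounds on $F$, and using Gronwall's inequality gives on every finite $[0,T]$ a bound of the form $\|\zeta-\tilde\zeta\|_{\D_{T,E}} \le e^{CT}\|(\zeta_0-\tilde\zeta_0,\zeta_{t0}-\tilde\zeta_{t0})\|_{X_{\zeta^*}}$.

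The main obstacle is closing the contraction in the combined $W^{1,\infty}\cap H^1$ topology of $\D_{T,E}$ uniformly near points where $\zeta$ is close to $0$, where $F(\zeta) = W_0'(|\zeta|)\zeta/|\zeta|$ has an apparent singularity and where the chain rule computation for $F(\zeta)_x$ is delicate. The regularity assumptions in Definition~\ref{def:W_0}, via Proposition~\ref{prop:W_0}, resolve this by providing finite $L_E'$ and $L_E''$, which allow one to differentiate $F$ through the chain rule even at the origin and to estimate both $F$ and its derivative uniformly on $X_E$; this is the only nontrivial ingredient beyond standard semigroup theory.
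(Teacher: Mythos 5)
Your overall strategy---rewriting \eqref{eq:2NVWlinear} in Duhamel form, running a Picard iteration on $\D_{T,E'}$ for some $E'>E$ using the constants from Proposition \ref{prop:W_0}, and continuing in time because the local existence time depends only on $E$ and $W_0$---is the same as the paper's; the semigroup packaging of d'Alembert's formula is cosmetic. There is, however, one genuine gap. You obtain energy conservation by ``multiplying \eqref{eq:2NVWlinear} by $\bar\zeta_t$ and taking real parts,'' but that computation requires $\zeta\in C^2$, whereas the fixed point you construct lies only in $\D_{T,E'}$, i.e.\ $\zeta(t)\in W^{1,\infty}(\R)$ with $\zeta(t)-\zeta^*\in H^1(\R)$ and $\zeta_t(t)\in L^\infty(\R)\cap L^2(\R)$. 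Without energy conservation (or at least the a priori bound $E(t)\le E$) you cannot conclude that the solution re-enters $X_E$ at time $T$, and the continuation argument that gives globality collapses. The paper closes this loop in three steps: it first takes smooth data $(\zeta_0,\zeta_{t0})\in C^2(\R)\times C^1(\R)$, for which each Picard iterate solves a linear wave equation classically and the energy balance \eqref{eq:iterative energy} holds pointwise; it then records the resulting energy defect \eqref{eq:energy cons}, which is controlled by $\hat\zeta_t-\zeta_t$ and therefore vanishes as the iteration converges, so the fixed point conserves energy; finally it extends to general data in $X_E$ via the Gr\"onwall stability estimates, noting that $\|W_0(|\zeta|)\|_{L^1}$ and the other energy terms are continuous with respect to the metric on $\D_{T,E'}$. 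Some version of this smooth-approximation step must appear in your argument.

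A secondary remark: the issue you single out as ``the main obstacle''---differentiating $F(\zeta)=W_0'(|\zeta|)\zeta/|\zeta|$ in $x$ via the chain rule near $\zeta=0$---does not actually arise. In the representation \eqref{eq:strong form} the $x$-derivative of $\int_{x-c(t-s)}^{x+c(t-s)}F(\zeta(s,y))\,\d y$ falls on the limits of integration and produces only boundary evaluations of $F(\zeta)$; hence the $W^{1,\infty}$ and $H^1$ bounds on the iterates require nothing beyond $L^\infty$ and $L^2$ control of $F(\zeta)$ itself, which is exactly how the paper argues in \eqref{eq:diff der Linf}. Differentiability of $s\mapsto W_0'(s)/s$ is needed only later, in the corollary upgrading strong solutions with smooth data to classical ones.
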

\begin{proof}
The idea of the proof is to show that the solution in strong form
\begin{align}
	\label{eq:strong form}
	\zeta(t,x) &= \frac12\left(\zeta_0(x-ct)+\zeta_0(x+ct)\right) + \frac1{2c}\int_{x-ct}^{x+ct}\zeta_{t0}(y)\:\d y\non&\quad - \frac1{2c}\int_0^t\int_{x-c(t-s)}^{x+c(t-s)}\frac{W_0'(|\zeta(s,y)|)}{|	\zeta(s,y)|}\zeta(s,y)\:\d y\d s,
\end{align}
is the unique fixed point of a contraction on $\D_{T,E}$, as is done for Picard iteration for a to be specified $T$. To be able to prove that energy is conserved by strong solutions we will at first assume that the initial data in $X_E$ is smooth, $(\zeta_0,\zeta_{t0})\in C^2(\R)\times C^1(\R)$. We will then prove that the strong solution with smooth initial data preserve energy, and that the solution operator taking initial data in $X_E$ to solutions in $\D_{T,E}$ is continuous and can thus be extended to all of $X_E$. We prove that the energy is still conserved for these solutions. Finally we note that the existence time $T$ depends on $W_0$ and $E$ only, and thus that the existence time can be extended indefinitely.

We will now assume that $0<E<E'$ and that $(\zeta_0,\zeta_{t0})\in X_{E}$ and $\zeta_0\in C^2(\R), \zeta_{t0}\in C^1(\R)$, and let $(\hat\zeta,\hat\zeta_t)\in\D_{T,E'}\cap C^2([0,\infty)\times\R)$ with $(\hat\zeta(0),\hat\zeta_t(0))=(\zeta_0,\zeta_{t0})$. Then let $(\zeta,\zeta_t)\in C([0,T],X_{E'})$, $\zeta\in C^2([0,\infty)\times\R)$, be the classical solution of
\begin{equation}
	\label{eq:linear iterative}
	\zeta_{tt}-c^2\zeta_{xx}=-\frac{W'_0(|\hat\zeta|)}{|\hat\zeta|}\hat\zeta,\quad (\zeta,\zeta_t)|_{t=0}=(\zeta_0,\zeta_{t0}).
\end{equation}
Then by Duhamel's principle we have
\begin{subequations}
\label{eq:iterative solutions}
\begin{align}
	\zeta(t,x) &= \frac12\left(\zeta_0(x-ct)+\zeta_0(x+ct)\right) + \frac1{2c}\int_{x-ct}^{x+ct}\zeta_{t0}(y)\:\d y\non &\quad - \frac1{2c}\int_0^t\int_{x-c(t-s)}^{x+c(t-s)}\frac{W_0'(|\hat
		\zeta(s,y)|)}{|\hat\zeta(s,y)|}\hat\zeta(s,y)\:\d y\:\d s,\\
	\zeta_t(t,x) &=  \frac c2\left(\zeta_{x0}(x+ct)-\zeta_{x0}(x-ct)\right) + \frac12\left(\zeta_{t0}(x+ct)+\zeta_{t0}(x-ct)\right)\:\d y\non &\quad - \frac12\int_0^t\frac{W_0'(|\hat
		\zeta(s,x-c(t-s))|)}{|\hat\zeta(s,x-c(t-s))|}\hat\zeta(s,x-c(t-s))\non &\qquad\qquad+\frac{W_0'(|\hat\zeta(s,x+c(t-s))|)}{|\hat\zeta(s,x+c(t-s))|}\hat\zeta(s,x+c(t-s))\:\d s.
\end{align}
\end{subequations}
To prove that the strong solution \eqref{eq:iterative solutions} is a member of $\D_{T,E'}$ we will need norm estimates on the solutions. For $(\zeta,\zeta_t) \in \D_{T,E}$ we have that
\begin{align}
	\left|\frac1{2c}\int_{x-c(t-s)}^{x+c(t-s)} \frac{W_0'(|\zeta(s,y)|)}{|\zeta(s,y)|}\zeta(s,y)\:\d y\right| &\leq L_E (t-s),\\
	\left|\frac{\partial}{\partial x} \frac1{2c}\int_{x-c(t-s)}^{x+c(t-s)} \frac{W_0'(|\zeta(s,y)|)}{|\zeta(s,y)|}\zeta(s,y)\:\d y\right| &\leq \frac1c L_E.
\end{align}
The integrals can be interpreted as convolutions
\begin{equation}
	\int_{x-ct}^{x+ct} f(y)\:\d y = \mathbf{1}_{[-ct,ct]}*f (x),
\end{equation}
and then by Young's inequality we get for any $p\in[1,\infty]$, 
\begin{equation}
	\label{eq:conv}
	\left\|\int_{x-ct}^{x+ct} f(y)\:\d y\right\|_p \leq 2ct\|f\|_p.
\end{equation}
Thus, the solution $(\zeta,\zeta_t)$ satisfies for $p=2,\infty$ the estimates
\begin{align}
	\|\zeta(t)-\zeta^*\|_p &\leq \|\zeta_0-\zeta^*\|_p + t\|\zeta_{t0}\|_p + \frac12t^2\max\{ L_{E'},\sqrt{k_{E'}E'}\},\\
	\|\zeta_x(t)\|_p &\leq \|\zeta_{x0}\|_p + \frac1c\|\zeta_{t0}\|_p + \frac2ct\max\{ L_{E'},\sqrt{k_{E'}E'}\},\\
	\|\zeta_t(t)\|_p &\leq c\|\zeta_{x0}\|_p + \|\zeta_{t0}\|_p + \frac12t^2\max\{ L_{E'},\sqrt{k_{E'}E'}\}
\end{align}
We need to show that the energy is bounded. We have that
\begin{align}
\label{eq:iterative energy}
	&\frac{\partial}{\partial t}\left(\frac12|\zeta_t|^2+\frac12c^2|\zeta_x|^2+W_0(|\zeta|)\right) - \frac{\partial}{\partial x}\frac12c^2\left(\bar\zeta_t\zeta_x+\zeta_t\bar\zeta_x\right) =\non &\quad \frac12\zeta_t\left(\frac{W_0'(|\zeta|)}{|\zeta|}\bar\zeta - \frac{W_0'(|\hat\zeta|)}{|\hat\zeta|}\bar{\hat\zeta}\right)+\frac12\bar\zeta_t\left(\frac{W_0'(|\zeta|)}{|\zeta|}\zeta - \frac{W_0'(|\hat\zeta|)}{|\hat\zeta|}\hat\zeta\right).
\end{align}
Integrating in space and using the above estimates gives, whenever $E(t)<E'$,
\begin{align}
	\frac{\d}{\d t} E(t) &\leq \frac12\int_\R\left|\zeta_t\right|\left|\frac{W_0'(|\zeta|)}{|\zeta|}\bar\zeta - \frac{W_0'(|\hat\zeta|)}{|\hat\zeta|}\bar{\hat\zeta}\right|\:\d x\non &\quad+\frac12\int_\R\left|\bar\zeta_t\right|\left|\frac{W_0'(|\zeta|)}{|\zeta|}\zeta - \frac{W_0'(|\hat\zeta|)}{|\hat\zeta|}\hat\zeta\right|\:\d x\non
		&\leq \|\zeta_t(t)\|_2\left(\|W_0'(|\zeta(t)|)\|_2 +\|W_0'(|\hat\zeta(t)|)\|_2\right)\non
		&\leq \sqrt{E(t)}\left(\sqrt{k_{E'}\|W_0(|\zeta(t)|)\|_1}+\sqrt{k_{E'}\|W_0(|\hat\zeta(t)|)\|_1}\right)\non
		&\leq 2\sqrt{k_{E'}}E'
\end{align}
which yields
\begin{equation}
	E(t) \leq E+2\sqrt{k_{E'}}E't.
\end{equation}
Thus for $T\leq \frac1{2\sqrt{k_{E'}}}\left(1-\frac{E}{E'}\right)$ we have that the solution map $\Phi:\hat\zeta \mapsto \zeta$ of \eqref{eq:linear iterative} maps smooth functions in $\D_{T,E'}$ to smooth functions in $\D_{T,E'}$.

To prove that $\Phi$ is a contraction we will need further estimates on the integrals in \eqref{eq:iterative solutions}. For $\zeta_1,\zeta_2 \in \C$ with $0<|\zeta_1|<|\zeta_2|<1$ we have
\begin{align}
	\left|\frac{W'(|\zeta_1|)}{|\zeta_1|}\zeta_1 -  \frac{W'(|\zeta_2|)}{|\zeta_2|}\zeta_2\right| &\leq \left|\frac{W'(|\zeta_1|)}{|\zeta_1|}\left(\zeta_1 - \frac{|\zeta_1|}{|\zeta_2|}\zeta_2\right) + 		\frac{\zeta_2}{|\zeta_2|}\left(W'(|\zeta_1|) - W'(|\zeta_2|)\right)\right|\non
	&\leq \left|\frac{W'(|\zeta_1|)}{|\zeta_1|}\right||\zeta_1-\zeta_2| + \sup_{\zeta\in(\zeta_1,\zeta_2)}\left|W''(\zeta)\right|\left|\zeta_1-\zeta_2\right|
\end{align}
Then for $(\zeta_1,\zeta_{1t}),(\zeta_2,\zeta_{2t})\in\D_{T,E}$ we get
\begin{equation}
	\label{eq:rhs diff}
	\left|\frac{W'(|\zeta_1(t,x)|)}{|\zeta_1(t,x)|}\zeta_1(t,x) -  \frac{W'(|\zeta_2(t,x)|)}{|\zeta_2(t,x)|}\zeta_2(t,x)\right| \leq (L_E'+L_E'')|\zeta_1(t,x)-\zeta_2(t,x)|.
\end{equation}
Hence
\begin{align}
\label{eq:diff Linf}
	&\left|\frac1{2c}\int_{x-c(t-s)}^{x+c(t-s)}\frac{W_0'(|\zeta_1(s,y)|)}{|\zeta_1(s,y)|}\zeta_1(s,y) -  \frac{W_0'(|\zeta_2(s,y)|)}{|\zeta_2(s,y)|}\zeta_2(s,y)\:\d y\right|\non &\qquad\leq (t-s)(L_E'+L_E'')\|\zeta_1(s)-\zeta_2(s)\|_{L^\infty(\R)},\\
\label{eq:diff der Linf}
	&\left|\frac{\partial}{\partial x}\frac1{2c}\int_{x-c(t-s)}^{x+c(t-s)}\frac{W'(|\zeta_1(s,y)|)}{|\zeta_1(s,y)|}\zeta_1(s,y) -  \frac{W'(|\zeta_2(s,y)|)}{|\zeta_2(s,y)|}\zeta_2(s,y)\:\d y\right|\non &\qquad\leq 2(L_E'+L_E'')\|\zeta_1(s)-\zeta_2(s)\|_{L^\infty(\R)}.
\end{align}
We will need $L^2(\R)$-estimates in $x$ as well. For $(\zeta_1,\zeta_{1t}),(\zeta_2,\zeta_{2t})\in\D_{T,E}$ we get from \eqref{eq:rhs diff} and \eqref{eq:conv}
\begin{align}
\label{eq:L2 estimate}
	&\quad\left\|\int_{x-c(t-s)}^{x+c(t-s)}\frac{W_0'(|\zeta_1(s,y)|)}{|\zeta_1(s,y)|}\zeta_1(s,y) -  \frac{W_0'(|\zeta_2(s,y)|)}{|\zeta_2(s,y)|}\zeta_2(s,y)\:\d y\right\|_2\non
	&\leq (L_E'+L_E'')2c(t-s)\|\zeta_1(s,\cdot)-\zeta_2(s,\cdot)\|_2.
\end{align}
To prove that $\Phi$ is a contraction let $\hat\zeta_1,\hat\zeta_2 \in \D_{T,E'}\cap C^2([0,T]\times\R)$ with coinciding initial data $(\zeta_0,\zeta_{t0})$, and denote $\zeta_1=\Phi(\hat\zeta_1),\zeta_2 = \Phi(\hat\zeta_2)$. Then by \eqref{eq:diff Linf}, \eqref{eq:diff der Linf}, \eqref{eq:L2 estimate}, and \eqref{eq:iterative solutions}, we have for $p=2,\infty$,
\begin{align}
	\sup_{t\in[0,T]}\|\zeta_1(t) -\zeta_2(t)\|_{L^p(\R)} &\leq \left(L_{E'}'+L_{E'}''\right)\int_0^t(t-s)\|\hat\zeta_1(s)-\hat\zeta_2(s)\|_p\:\d s,\\
	\sup_{t\in[0,T]}\|\zeta_{1,t}(t) -\zeta_{2,t}(t)\|_{L^p(\R)} &\leq \left(L_{E'}'+L_{E'}''\right)\int_0^t\|\hat\zeta_1(s)-\hat\zeta_2(s)\|_p\:\d s,\\
	\sup_{t\in[0,T]}\|\zeta_{1,x}(t) -\zeta_{2,x}(t)\|_{L^p(\R)} &\leq \frac1c\left(L_{E'}'+L_{E'}''\right)\int_0^t\|\hat\zeta_1(s)-\hat\zeta_2(s)\|_p\:\d s.
\end{align}
We have that
\begin{align}
	\frac\d{\d t}\left(W_0(|\zeta_1|)-W_0(|\zeta_2|)\right) &= W_0'(|\zeta_1|)\frac{\bar\zeta_1\zeta_{1,t} + \zeta_1\bar\zeta_{1,t} }{2|\zeta_1|}\non &\quad -  W_0'(|\zeta_2|)\frac{\bar\zeta_2\zeta_{2,t} + \zeta_2\bar\zeta_{2,t}}{2|\zeta_2|}\non
	&= \left(W_0'(|\zeta_1|)- W_0'(|\zeta_2|)\right)\frac{\bar\zeta_1\zeta_{1,t} + \zeta_1\bar\zeta_{1,t}}{2|\zeta_1|}\non &\quad  + W_0'(|\zeta_2|)\left(\frac{\bar\zeta_1\zeta_{1,t} + \zeta_1\bar\zeta_{1,t}}{2|\zeta_1|} - \frac{\bar\zeta_2\zeta_{2,t} + \zeta_2\bar\zeta_{2,t}}{2|\zeta_2|}\right),
\end{align}
Further note that
\begin{align}
	 \left(\frac{\bar\zeta_1\zeta_{1,t} + \zeta_1\bar\zeta_{1,t}}{2|\zeta_1|} - \frac{\bar\zeta_2\zeta_{2,t} + \zeta_2\bar\zeta_{2,t}}{2|\zeta_2|}\right) &= \frac{1}{|\zeta_2|}\bigg[\frac12\left(|\zeta_2|-|\zeta_1|\right)(\frac{\bar\zeta_1}{|\zeta_1|}\zeta_{1,t}+\frac{\bar\zeta_1}{|\zeta_1|}\bar\zeta_{1,t}) \non &\quad + \frac12\big((\bar\zeta_1-\bar\zeta_2)\zeta_{1,t}+\bar\zeta_2(\zeta_{1,t}-\zeta_{2,t})\non &\quad+(\zeta_1-\zeta_2)\bar\zeta_{1,t}+\zeta_2(\bar\zeta_{1,t}-\bar\zeta_{2,t})\big)\bigg],
\end{align}
and hence
\begin{align}
\label{eq:diff W_0}
	\frac\d{\d t}\left\|W_0(|\zeta_1|)-W_0(|\zeta_2|)\right\|_1 &\leq \int_\R\left|W_0'(|\zeta_1|)- W_0'(|\zeta_2|)\right|\left|\frac{\bar\zeta_1\zeta_{1,t} + \zeta_1\bar\zeta_{1,t}}{2|\zeta_1|}\right|\:\d x\non &\quad+2\int_\R\left|\frac{W_0'(|\zeta_2|)}{|\zeta_2|}\right|\left|\zeta_1-\zeta_2\right|\left|\zeta_{1,t}\right|\:\d x \non &\quad+  \int_\R\left|\frac{W_0'(|\zeta_2|)}{|\zeta_2|}\right|\left|\zeta_{1,t}-\zeta_{2,t}\right||\zeta_2|\:\d x\non
	&\leq \left(L_{E'}''+2L_{E'}'\right)\sqrt{E'}\|\zeta_1 - \zeta_2\|_2 + \sqrt{k_{E'}E'}\|\zeta_{1,t}-\zeta_{2,t}\|_2.
\end{align}

Hence for $T$ small enough $\Phi$ is a contraction on the subspace of $\D_{T,E'}$ consisting of $\zeta\in C^2([0,T]\times\R)$. Thus there is a unique fixed point in $\D_{T,E'}$ which is a strong solution in the sense of \eqref{eq:strong form}. The limit may not be $C^2$, however. We want to prove that energy is actually conserved. From the construction of the metric on $\D_{T,E'}$ we see that the energy converges. A more careful investigation of \eqref{eq:iterative energy} reveals that
\begin{align}
\label{eq:energy cons}
	E(t) - E(0)&= \frac 12\int_0^t \int_\R \frac{W_0'(|\hat\zeta(s,y)|)}{|\hat\zeta(s,y)|}\bar{\hat\zeta}(s,y)(\hat\zeta_t(s,y)-\zeta_t(s,y)) \non &\qquad+\frac{W_0'(|\hat\zeta(s,y)|)}{|\hat\zeta(s,y)|}\hat\zeta(s,y)(\bar{\hat\zeta}_t(s,y)-\bar\zeta_t(s,y))\:\d y \d s.
\end{align}
Thus the limit conserves energy since the right hand side of \eqref{eq:energy cons} tends to zero as $\zeta$ tends to the fixed point. Then we know that the solution $(\zeta,\zeta_t)$ is actually in $\D_{T,E}\subsetneq\D_{T,E'}$, and hence, $(\zeta(T),\zeta_t(T))\in X_E$.

We will now show that we can relax the smoothness conditions on the initial values. From the strong formulation of the equation \eqref{eq:strong form} and estimate \eqref{eq:rhs diff} we get, from the Gr\" onwall inequality, the estimates
\begin{align}
	\|\zeta_1(t)-\zeta_2(t)\|_p &\leq \left(\|\zeta_{1,0}-\zeta_{2,0}\|_p+t\|\zeta_{1,t,0}-\zeta_{2,t,0}\|_p\right)e^{\frac12(L_E'+L_E'')t^2},\\
	\|\zeta_{1,t}(t)-\zeta_{2,t}(t)\|_p &\leq \|\zeta_{1,t,0}-\zeta_{2,t,0}\|_p + c\|\zeta_{1,x,0}-\zeta_{2,x,0}\|_p\non &\quad + (L_E'+L_E'')t\left(\|\zeta_{1,0}-\zeta_{2,0}\|_p+t\|\zeta_{1,t,0}-\zeta_{2,t,0}\|_p\right)e^{\frac12(L_E'+L_E'')t^2},\\
	\|\zeta_{1,x}(t)-\zeta_{2,x}(t)\|_p &\leq \frac1c\|\zeta_{1,t,0}-\zeta_{2,t,0}\|_p + \|\zeta_{1,x,0}-\zeta_{2,x,0}\|_p\non &\quad  +\frac{L_E'+L_E''}ct\left(\|\zeta_{1,0}-\zeta_{2,0}\|_p+t\|\zeta_{1,t,0}-\zeta_{2,t,0}\|_p\right)e^{\frac12(L_E'+L_E'')t^2},
\end{align}
for $p=2,\infty$. Thus the strong solution depends continuously on the initial data in the metric induced from $\|(\zeta_0,\zeta_{t0})\| = \|\zeta_0-\zeta^*\|_{H^1(\R)} + \|\zeta_0\|_{W^{1,\infty}(\R)}\| + \|\zeta_{t0}\|_{L^2(\R)} + \|\zeta_{t0}\|_{L^\infty(\R)} + \|W_0(|\zeta_0|)\|_{L^1(\R)}$. Hence we can extend the solution operator to all initial data in $X_E$. The argument can now be repeated to yield a solution in $[0,nT]$ for any $n\in\N$, and hence there exists a global strong solution that conserve energy.
\end{proof}
In the case where the initial data is smooth, $\zeta_0\in C^2(\R)$ and $\zeta_{t0}\in C^1(\R)$, the strong solution can be upgraded to a classical solution $\zeta\in C^2([0,\infty)\times\R)$.
\begin{corollary}
For initial data $(\zeta_0,\zeta_{t0})\in X_E\cap C^2(\R)\times C^1(\R)$ the strong solution $\zeta$ in Theorem \ref{thm:global} belongs to $C^2([0,\infty)\times\R)$ and satisfies the equation \eqref{eq:2NVWlinear} pointwise.
\end{corollary}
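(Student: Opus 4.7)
The plan is to bootstrap regularity directly from the integral identity \eqref{eq:strong form} satisfied by the strong solution produced in Theorem~\ref{thm:global}, using that the nonlinearity $g(z) := W_0'(|z|)\, z/|z|$ (with $g(0)=0$) is $C^1$ on the disk $\{|z|\le c_E\}$ in which $\zeta$ takes values. The condition $\lim_{s\to 0}(W_0(s)-W_0(0))/s^2 = W_0''(0)/2 \in \R$ combined with $W_0 \in C^4$ forces $W_0'(0)=0$ (otherwise the quotient would blow up), so $h(s) := W_0'(s)/s$ extends continuously to $[0,c_E]$ and $g(z) = h(|z|)z$ has continuous first partials as a map $\R^2\to\R^2$: the prototypical term $|z|z$ is $C^1$ though not $C^2$ at the origin, but only $C^1$ regularity of $g$ will be required.

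The first bootstrap promotes $\zeta$ from Lipschitz to $C^1$. Since $\zeta\in\D_{\infty,E}$ is Lipschitz in both variables and $g$ is continuous, $G(s,y) := g(\zeta(s,y))$ is continuous on $[0,\infty)\times\R$. Differentiating \eqref{eq:strong form} in $x$ and in $t$ (legitimate since the integrand $G$ is continuous and $\zeta_0,\zeta_{t0}$ are $C^1$) yields the explicit d'Alembert-type formulas
\begin{align*}
\zeta_x(t,x) &= \tfrac12\bigl(\zeta_0'(x-ct)+\zeta_0'(x+ct)\bigr) + \tfrac1{2c}\bigl(\zeta_{t0}(x+ct)-\zeta_{t0}(x-ct)\bigr) \\
&\quad - \tfrac1{2c}\int_0^t\bigl(G(s,x+c(t-s))-G(s,x-c(t-s))\bigr)\,\d s,
\end{align*}
and the analogous expression for $\zeta_t$, each of which is jointly continuous in $(t,x)$. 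Hence $\zeta\in C^1([0,\infty)\times\R)$.

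With $\zeta \in C^1$, the partial derivatives $G_y = Dg(\zeta)\cdot\zeta_x$ and $G_s = Dg(\zeta)\cdot\zeta_t$ are continuous (where $Dg$ is the real Jacobian of $g$), so differentiating the formulas from the previous step once more gives continuous expressions for $\zeta_{xx}, \zeta_{tt}$, and $\zeta_{xt}$, each built from $\zeta_0'', \zeta_{t0}'$, boundary terms $\pm G(t,x)$ produced by the Leibniz rule, and integrals of $G_y$ or $G_s$ over the backward characteristic cone. Thus $\zeta \in C^2([0,\infty)\times\R)$, and the standard fact that Duhamel's formula solves the inhomogeneous wave equation gives $\zeta_{tt} - c^2 \zeta_{xx} = -g(\zeta)$ pointwise, which is exactly \eqref{eq:2NVWlinear}. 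The main obstacle is the first step: $|z|$ is not differentiable at $z=0$, and only the algebraic fact that $W_0'(0)=0$, forced (not assumed) by the behavior of $W_0$ near the origin encoded in Definition~\ref{def:W_0}, rescues $g$ as a $C^1$ map across $z=0$.
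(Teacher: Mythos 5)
Your proposal is correct and follows essentially the same route as the paper: establish that $z\mapsto W_0'(|z|)z/|z|$ is $C^1$ on the disk where the solution takes its values, then bootstrap regularity twice through the d'Alembert/Duhamel representation, differentiating the cone integral first with $G$ merely continuous and then again once $\zeta\in C^1$ makes $G$ continuously differentiable. The only point where the paper is more explicit is the first step, where it uses the fourth-order Taylor expansion of $W_0$ to control $\frac{\d}{\d s}\frac{W_0'(s)}{s}$ and its modulus of continuity, whereas you argue more softly from $W_0'(0)=0$; this still works because $W_0\in C^4$ makes that quotient differentiable rather than merely continuous, which is what the continuity of $Dg$ at $z=0$ actually requires.
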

\begin{proof}
We now assume that $\zeta$ is a strong solution given implicitly by \eqref{eq:strong form}, and that $\zeta\in C([0,\infty),W^{1,\infty}(\R))\cap C^1([0,\infty),L^\infty(\R))$. Note that we have
\begin{equation}
	W_0(s) = W_0(0) + \frac12W_0^{(2)}(0)s^2 + \frac16W_0^{(3)}(0)s^3 + \int_0^s\frac16W_0^{(4)}(u)(s-u)^3\:\d u,\\
\end{equation}
and thus
\begin{equation}
	\frac{\d}{\d s}\frac{W_0'(s)}{s} = \frac12W_0^{(3)}(0) - \frac1{s^2}\int_0^s\frac12W_0^{(4)}(u)(s-u)^2\:\d u + \frac1s\int_0^sW_0^{(4)}(u)(s-u)\:\d u.
\end{equation}
One can then readily verify that
\begin{align}
	\left|\frac{\d}{\d s}\frac{W_0'(s)}{s}(s)\right| &\leq \frac12\left|W_0^{(3)}(0)\right| + \frac32\left\|W_0^{(4)}\right\|_{L^\infty([0,s])}s,\\
	\left|\frac{\d}{\d s}\frac{W_0'(s)}{s}(s_1) - \frac{\d}{\d s}\frac{W_0'(s)}{s}(s_2)\right| &\leq \frac52 \left\|W_0^{(4)}\right\|_{L^\infty([0,\max\{s_1,s_2\}])}\left|s_1-s_2\right|.
\end{align}
Thus $\zeta\mapsto \frac{W_0'(|\zeta|)}{|\zeta|}\zeta$ is $C^1$ in the open unit disk in two dimensions. Direct computation then yields that for
\begin{equation}
	I(t,x) = \int_0^t\int_{x-c(t-\tau)}^{x+c(t-\tau)}\frac{W_0'(|\zeta(\tau,y)|)}{|\zeta(\tau,y)|}\zeta(\tau,y)\:\d y\d\tau
\end{equation}
the first derivatives are
\begin{align}
	\frac{\partial}{\partial t} I(t,x) &= c\int_0^t \frac{W_0'(|\zeta(\tau,x+c(t-\tau))|)}{|\zeta(\tau,x+c(t-\tau))|}\zeta(\tau,x+c(t-\tau))\non &\qquad + \frac{W_0'(|\zeta(\tau,x-c(t-\tau))|)}{|\zeta(\tau,x-c(t-\tau))|}\zeta(\tau,x-c(t-\tau))\:\d\tau,\\
\frac{\partial}{\partial x} I(t,x) &= \int_0^t 	
	\frac{W_0'(|\zeta(\tau,x+c(t-\tau))|)}{|\zeta(\tau,x+c(t-\tau))|}\zeta(\tau,x+c(t-\tau))\non &\qquad - \frac{W_0'(|\zeta(\tau,x-c(t-\tau))|)}{|\zeta(\tau,x-c(t-\tau))|}\zeta(\tau,x-c(t-\tau))\:\d\tau,
\end{align}
which are continuous since $\zeta$ is continuous by assumption. Hence $\zeta\in C^1([0,\infty)\times\R)$. Similarly the second derivatives are
\begin{align}
	\frac{\partial^2}{\partial t^2} I(t,x) &= 2c\frac{W_0'(|\zeta(t,x)|)}{|\zeta(t,x)|}\zeta(t,x)\non &\quad+ c^2\int_0^t\frac{\partial}{\partial x}\frac{W_0'(|\zeta(\tau,x+c(t-\tau))|)}{|\zeta(\tau,x+c(t-\tau))|}\zeta(\tau,x+c(t-\tau))\non &\qquad- \frac{\partial}{\partial x}\frac{W_0'(|\zeta(\tau,x-c(t-\tau))|)}{|\zeta(\tau,x-c(t-\tau))|}\zeta(\tau,x-c(t-\tau))\:\d\tau,\\
	\frac{\partial^2}{\partial t\partial x}I(t,x) &= c\int_0^t\frac{\partial}{\partial x}\frac{W_0'(|\zeta(\tau,x+c(t-\tau))|)}{|\zeta(\tau,x+c(t-\tau))|}\zeta(\tau,x+c(t-\tau))\non &\qquad + \frac{\partial}{\partial x}\frac{W_0'(|\zeta(\tau,x-c(t-\tau))|)}{|\zeta(\tau,x-c(t-\tau))|}\zeta(\tau,x-c(t-\tau))\:\d\tau,\\
	\frac{\partial^2}{\partial x^2}I(t,x) &=\int_0^t\frac{\partial}{\partial x}\frac{W_0'(|\zeta(\tau,x+c(t-\tau))|)}{|\zeta(\tau,x+c(t-\tau))|}\zeta(\tau,x+c(t-\tau))\non &\qquad - \frac{\partial}{\partial x}\frac{W_0'(|\zeta(\tau,x-c(t-\tau))|)}{|\zeta(\tau,x-c(t-\tau))|}\zeta(\tau,x-c(t-\tau))\:\d\tau,
\end{align}
which are continuous. Thus $\zeta$ given by 
\begin{equation}
	\zeta(t,x) = \frac12\left(\zeta(t,x+ct)+\zeta(t,x-ct)\right) + \frac1{2c}\int_{x-ct}^{x+ct}\zeta_{t0}(y)\:\d y -\frac1{2c}I(t,x),
\end{equation}
is a sum of $C^2([0,\infty)\times\R)$ functions and is thus $C^2([0,\infty)\times\R)$. We get
\begin{align}
	\frac{\partial^2}{\partial t^2}\zeta(t,x)-c^2\frac{\partial^2}{\partial t^2}\zeta(t,x) &= -\frac{\partial^2}{\partial t^2}\frac1{2c}I(t,x) + c^2\frac{\partial^2}{\partial x^2}\frac1{2c}I(t,x) \non
	&= - \frac{W_0'(|\zeta(t,x)|)}{|\zeta(t,x)|}\zeta(t,x)
\end{align}
pointwise.
\end{proof}
We will only give a local existence theorem for \eqref{eq:2NVW}. The main difficulty for \eqref{eq:2NVW} compared to \eqref{eq:2NVWlinear} is that \eqref{eq:2NVW} is quasilinear, that is the terms $c(\psi)^2\psi_{xx}$ and $c(\psi)^2s_{xx}$ are nonlinear. The method used in the proof of Theorem \ref{thm:global} relied heavily on \eqref{eq:2NVWlinear} being semilinear.
\begin{theorem}
\label{thm:local}
There exists a unique short time solution of \eqref{eq:2NVW} for smooth initial data $(\psi_0,\psi_{t0},s_0,s_{t0})$, where $s_0$ is bounded away from $0$ and the energy is finite.
\end{theorem}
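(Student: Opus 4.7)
The plan is to run a Picard iteration in the spirit of Kato's quasilinear semigroup theory \cite{K}. Dividing the first equation of \eqref{eq:2NVW} by $s^2$ (legitimate since $s_0$ is bounded away from zero, a lower bound that will be propagated for short times), the system takes the form
\begin{align*}
\psi_{tt}-c(\psi)^2\psi_{xx} &= F_1(\psi,s,\psi_t,\psi_x,s_t,s_x),\\
s_{tt}-c(\psi)^2 s_{xx} &= F_2(\psi,s,\psi_t,\psi_x,s_t,s_x),
\end{align*}
where $F_1,F_2$ are smooth on $\{s\geq s_{\min}/2\}$ and at most quadratic in the first derivatives. Both equations share the principal part $\partial_{tt}-c(\psi)^2\partial_{xx}$, which is uniformly strictly hyperbolic since $c(\psi)^2\geq\min(K_1,K_3)>0$.

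I would then define iterates $(\psi^{n+1},s^{n+1})$ as the unique solutions of the linear wave system obtained by freezing the coefficient $c(\psi^n)^2$ and the right hand side $F_i(\psi^n,s^n,\ldots)$, with the prescribed initial data. The iteration is closed in the space $X_T^k=\{(\psi,s)\in C([0,T],H^k(\R))\cap C^1([0,T],H^{k-1}(\R))\}$ for some $k\geq 3$, so that Sobolev embedding supplies $C^1$ control of the coefficient and the nonlinearities. Two a priori bounds must be propagated: (a) a uniform $H^k$ bound on the iterates and (b) a positive lower bound on $s^n$. For (a), differentiating the linearised equation up to order $k-1$, pairing with the corresponding $\partial_t$-derivative and integrating, the commutator $[\partial^\alpha,c(\psi^n)^2\partial_x^2]$ together with the forcing $F_i(\psi^n,s^n,\ldots)$ are handled by Moser's tame product and composition inequalities, yielding $\tfrac{d}{dt}\mathcal E_k^{n+1}\leq C(M)\mathcal E_k^{n+1}+C(M)$ whenever $\mathcal E_k^n\leq M^2$. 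For (b), Sobolev embedding gives $s^{n+1}_t\in L^\infty$, so $\inf_x s^{n+1}(t)\geq \inf_x s_0-t\|s^{n+1}_t\|_\infty\geq s_{\min}/2$ as long as $t$ is smaller than a constant depending on $M$. Choosing $M$ and $T>0$ small enough in terms of the initial data closes the iteration.

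Contraction in the weaker norm $C([0,T],H^{k-1})\cap C^1([0,T],H^{k-2})$ follows by subtracting consecutive equations: the differences satisfy a linear wave system whose source is bounded in $H^{k-2}$ by $C(M)$ times the previous $H^{k-1}$-difference, and a Gronwall estimate produces contraction after further reducing $T$. The fixed point lies in $X_T^k$ by weak-$*$ compactness, is classical by Sobolev embedding, and solves \eqref{eq:2NVW} pointwise; uniqueness follows by the same difference argument applied to two putative solutions with identical data.

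The main obstacle is simultaneously controlling (i) the quasilinear coefficient $c(\psi)^2$ through composition estimates, (ii) the quadratic gradient nonlinearities in $F_1$ and $F_2$ through tame products, and (iii) the singular factors $1/s$ and $1/s^2$ in $F_1$. Item (iii) is what restricts the conclusion to a local-in-time statement and forces the hypothesis that $s_0$ is bounded away from zero: the lifespan $T$ is essentially the first time at which the pointwise lower bound on $s$ could halve, which in turn is governed by $\|s_t\|_\infty$ and hence by the $H^k$-norm of the initial data.
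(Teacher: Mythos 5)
Your argument is essentially correct, but it follows a genuinely different route from the paper. You keep the system in second-order form, observe that both equations share the strictly hyperbolic principal part $\partial_{tt}-c(\psi)^2\partial_{xx}$, and run the textbook quasilinear iteration in $H^k$ with Moser-type commutator and composition estimates, contracting in a lower norm. The paper instead introduces $(\psi,s,\phi,v,\omega,r)=(\psi,s,\psi_t,s_t,c(\psi)\psi_x,c(\psi)s_x)$ to obtain a first-order quasilinear symmetric hyperbolic system \eqref{eq:symhyp}, solves each linearized step explicitly by characteristics, and closes the fixed point in the $L^\infty$-based spaces $\D_{T,E,L}$ and $\D_{T,E,L}^{\mathrm{lin}}$ (i.e.\ $W^{1,\infty}$ control with a $W^{2,\infty}$ bound on the iterates, plus a separately propagated energy bound and a bound on $1/s$). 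The trade-off: the paper's characteristics argument is elementary, exploits the one-dimensional setting, and requires only uniform ($W^{2,\infty}$-type) bounds on the data together with finite energy, whereas your $H^k$ framework additionally requires square-integrability of the higher derivatives of the data (after subtracting the constant states at infinity, a point you should make explicit since $\psi$ and $s$ themselves are not in $L^2$); in return your method is the one that survives in higher space dimensions and packages the lower-bound on $s$ and the coefficient control into a single scale of norms. Two small caveats for your write-up: you must also keep $s$ bounded above away from $1$ so that $W_0'(s)$ stays defined, propagated for short time exactly as your lower bound on $s$; and since $W_0$ is only assumed $C^4$, the nonlinearity $F_2$ is only $C^3$ in $s$, which caps the admissible $k$ (this is harmless for $k=3$, which suffices in one dimension).
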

\begin{proof}
Local existence, uniqueness, and continuous dependence on initial data follows from the standard approach taken to semigroups of nonlinear evolution equations. Here we will diverge from the standard semigroup approach by considering linear operators on Banach spaces, and instead solve the linear equations by characteristics.

In particular, inspired by \cite{K2}, the system \eqref{eq:2NVW} can be rewritten as a quasilinear symmetric hyperbolic system and the results in \cite{K} applied. Indeed, introduce the variables
\begin{subequations}
\begin{align}
	\phi &= \psi_t,\\
	v &= s_t,\\
	\omega &= c(\psi)\psi_x,\\
	r &= c(\psi)s_x,
\end{align}
\end{subequations}
the system \eqref{eq:2NVW} can be written as a quasilinear symmetric hyperbolic system
\begin{subequations}
\label{eq:symhyp}
\begin{align}
	\psi_t &= \phi,\\
	s_t &= v,\\
	\phi_t-c(\psi)\omega_x &= -\frac2s\left(\phi v-\omega r\right)+\frac{c'(\psi)}{c(\psi)}r^2,\\
	v_t-c(\psi)r_x &=s\left(\phi^2-\omega^2\right)+\frac{c'(\psi)}{c(\psi)}\omega r-W_0'(s),\\
	\omega_t-c(\psi)\phi_x &= \frac{c'(\psi)}{c(\psi)}\phi\omega,\\
	r_t-c(\psi)v_x &= \frac{c'(\psi)}{c(\psi)}\phi r.
\end{align}
\end{subequations}
We introduce the Banach space $X=W^{1,\infty}(\R,\R^6)$, and let 
\begin{align}
	X_{E,L} = \bigg\{&U=(\psi,s,\phi,v,\omega,r)\in X \mid \|U\|_{W^{1,\infty}}\leq L, \left\|\frac1s\right\|_{L^\infty(\R)}\leq L,\non &\int_{\R}\frac12s^2\left(\phi^2+\omega^2\right)+\frac12\left(v^2+r^2\right)+W_0(s)\:\d x \leq E \bigg\}.
\end{align}
Note that for elements of $X_{E_L}$ the variable $s$ is of definite sign since $s$ is continuous and  $\frac1s$ is bounded. We define our solution space $\D_{T,E,L}$ as follows
\begin{equation}
	\D_{T,E,L} = C([0,T],X_{E,L})\cap C^1([0,T],L^\infty(\R,\R^6)).
\end{equation}
We equip $\D_{T,E,L}$ with the metric
\begin{equation}
	d_{\D_{T,E,L}}(U_1,U_2) = \sup_{0\leq t\leq T}\|U_1(t)-U_2(t)\|_{W^{1,\infty}(\R,\R^6)}+ \sup_{0\leq t\leq T}\|U_{1,t}(t)-U_{2,t}(t)\|_{L^\infty(\R,\R^6)},
\end{equation}
which renders $\D_{T,E,L}$ a complete metric space. Formally, we can formulate \eqref{eq:symhyp} as
\begin{equation}
	U_t-c(\psi)AU_x = F(U),
\end{equation}
where $A$ is a constant symmetric hyperbolic matrix with eigenvalues $-1,0,1$. We want to show existence of short time solutions from a fix point argument. To be able to make a contraction we will further restrict our space for approximate solutions to
\begin{equation}
	\D_{T,E,L}^{\mathrm{lin}} = \left\{U\in\D_{T,E,L}\mid U(t)\in W^{2,\infty}(\R,\R^6), \|U_{xx}(t)\|_{L^\infty(\R,\R^6)}\leq L\right\}.
\end{equation}

Let $0<E<E', 0<L<L'$ and $U_0\in X_{E,L}$ with $\|U_{0xx}\|_\infty\leq L$ be the initial data, then for any $\hat U\in\D_{T,E',L'}^{\mathrm{lin}}$ with $\hat U(0) = U_0$, the linear system of transport equations
\begin{equation}
U_t -c(\hat\psi)AU_x = F(\hat U)
\end{equation}
can be solved by characteristics. Define the backward characteristics $x_\pm$ at time $\tau$ from point $(t,x)$ by
\begin{equation}
	\frac{\d}{\d\tau}x_\pm(\tau;t,x) = \mp c\bigg(\hat\psi(\tau,x_\pm(\tau;t,x))\bigg), 
\end{equation}
and note that $x_\pm(\tau;t,x) = x_\pm(\tau;s,x_\pm(s;t,x))$ and $x_\pm(t;t,x))=x$. Then, by taking the derivative with respect to $s$, we get the following
\begin{equation}
	\frac{\partial}{\partial t}x_\pm(\tau;t,x) \mp c(\hat\psi(\tau,x_\pm(\tau;t,x)))\frac{\partial}{\partial x}x_\pm(\tau;t,x) = 0.
\end{equation}
Thus
\begin{align}
	|x_\pm(\tau_1;t,x)-x_\pm(\tau_2;t,x)| &\leq \|c\|_\infty|\tau_2-\tau_1|,\\
	|x_\pm(\tau;t,x_1)-x_\pm(\tau;t,x_2)| &\leq \mathrm{exp}\left\{\left\|\frac1c\right\|_\infty L'|t-\tau|\right\}|x_1-x_2|,\\
	|x_+(\tau;t,x)-x_-(\tau;t,x)| &\leq 2\|c\|_\infty|t-\tau|,\\
	\left|\frac{\partial}{\partial t}x_\pm(\tau;t,x)\right| &\leq \|c\|_\infty\:\mathrm{exp}\left\{\left\|\frac1c\right\|_\infty L'|t-\tau|\right\}.
\end{align}
We need to show that the Duhamel operator $DF(\hat U)(t) = \int_0^t T_\tau(t-\tau)F(\hat U(\tau))\:\d\tau$, where $T_{\tau}$ is the solution operator of $U_t(t)-c(\psi(t+\tau))AU_x(t) = 0$, satisfies $DF(\hat U)\in C([0,\infty),W^{2,\infty}(\R,\R^6))$, and $\|DF(\hat U_1)(t)-DF(\hat U_2)(t)\|_{W^{1,\infty}(\R)}\leq C(t,E',L')\|\hat U_1-\hat U_2\|_{W^{1,\infty}(\R)}$ for $\hat U_1,\hat U_2\in \D_{T,E',L'}^{\mathrm{lin}}$. Since
\begin{equation}
	\frac{\partial}{\partial x}\frac{c'(\psi)}{c(\psi)} = \frac{c''(\psi)c(\psi)-c'(\psi)^2}{c(\psi)^2}\psi_x,
\end{equation}
and  $F$ is a smooth function, we have that for $\hat U\in \D_{T,E',L'}^{\mathrm{lin}}$ there is
\begin{equation}
	\|F(\hat U(t))\|_{W^{2,\infty}(\R)} \leq C_{E',L'}.
\end{equation}
Hence the Duhamel operator satisfies $\|DF(\hat U)(t)\|_{W^{2,\infty}(\R)}\leq C_{E',L'}t$. Note that
\begin{equation}
\frac\partial{\partial\tau}\left(x_\pm^1(\tau;t,x_1)-x_\pm^2(\tau;t,x_2)\right)=\mp\left(c(\hat\psi_1(\tau,x_\pm^1(\tau;t,x_1)))-c(\hat\psi_2(\tau,x_\pm^2(\tau;t,x_2)))\right),
\end{equation}
and thus
\begin{align}
	|x_\pm^1(\tau;t,x_1)-x_\pm^2(\tau;t,x_2)| &\leq |x_1-x_2|e^{\|c'\|_\infty L'(t-\tau)}\non &\quad+\frac1{L'} \left(e^{\|c'\|_\infty L'(t-\tau)}-1\right)\|\hat\psi_1-\hat\psi_2\|_{L^\infty([0,t],L^\infty(\R))},\\
	\left|\frac{\partial}{\partial x}x_\pm^1(\tau;t,x)-\frac{\partial}{\partial x}x_\pm^2(\tau;t,x)\right|  &\leq C_{L'}\mathrm{exp}\left\{k_{L'}(t-\tau)\right\}\sup_{0\leq\tau\leq t}\|\hat\psi_1-\hat\psi_2\|_{W^{1,\infty}(\R)},
\end{align}
where $C_{L'}$ and $k_{L'}$ depends on $c$ and $L'$ only. Thus, since $F$ is a smooth function and $\|\hat U_1\|_{W^{2,\infty}(\R)}, \|\hat U_2\|_{W^{2,\infty}(\R)} \leq L'$, 
\begin{align}
	&\left|\int_0^tF_j(\hat U_1)(\tau,x^1_\pm(\tau;t,x))-F_j(\hat U_2)(\tau,x^2_\pm(\tau;t,x))\:\d\tau\right|\non
	&\leq \left|\int_0^tF_j(\hat U_1)(\tau,x^1_\pm(\tau;t,x))-F_j(\hat U_2)(\tau,x^1_\pm(\tau;t,x))\:\d\tau\right| \non &\quad+ \left|\int_0^tF_j(\hat U_2)(\tau,x^1_\pm(\tau;t,x))-F_j(\hat U_2)(\tau,x^2_\pm(\tau;t,x))\:\d\tau\right|\non
	&\leq \left|\int_0^tF_j(\hat U_1)(\tau,x^1_\pm(\tau;t,x))-F_j(\hat U_2)(\tau,x^1_\pm(\tau;t,x))\:\d\tau\right| \non &\quad+ t \bar C(L',E')\|\hat\psi_1-\hat\psi_2\|_{L^\infty([0,t],L^\infty(\R)}\non
	&\leq t\tilde C(L',E')\|\hat U_1 -\hat U_2\|_{L^\infty([0,t],L^\infty(\R,\R^6)}
\end{align}
To compute $\frac\partial{\partial x}\left(DF(\hat U_1)(t)-DF(\hat U_2)(t)\right)$ we estimate
\begin{align}
	&\quad\left|\frac\partial{\partial x}\int_0^t F_j(\hat U_1)(\tau,x_\pm^1(\tau;t,x)) - F_j(\hat U_2)(\tau,x_\pm^2(\tau;t,x)) \:\d\tau\right|\non &\leq \left|\int_0^t\left(\nabla_UF_j(\hat U_1)\frac{\partial \hat U_1}{\partial x}-\nabla_UF_j(\hat U_2)\frac{\partial \hat U_2}{\partial x}\right)(\tau,x_\pm^1(\tau;t,x)) \frac{\partial}{\partial x}x_\pm^1(\tau;t,x)\:\d\tau\right| \non
	&\quad + \left|\int_0^t\nabla_UF_j(\hat U_2)\frac{\partial \hat U_2}{\partial x}(\tau,x_\pm^1(\tau;t,x))\frac{\partial}{\partial x}\left(x_\pm^1(\tau;t,x)-x_\pm^2(\tau;t,x)\right)\:\d\tau \right|\non
	&\quad +\bigg|\int_0^t\left(\nabla_UF_j(\hat U_2)\frac{\partial \hat U_2}{\partial x}(\tau,x_\pm^1(\tau;t,x))-\nabla_UF_j(\hat U_2)\frac{\partial \hat U_2}{\partial x}(\tau,x_\pm^2(\tau;t,x))\right)\non &\qquad\qquad\times\frac\partial{\partial x}x_\pm^2(\tau;t,x)\:\d\tau\bigg|\non
&\leq C_{E',L'}\left(\mathrm{exp}\left\{\left\|\frac1c\right\|_\infty L't\right\}-1\right)\sup_{0\leq\tau\leq t}\|\hat U_1(\tau)-\hat U_2(\tau)\|_{W^{1,\infty}(\R)}\non
	&\quad + C_{E',L'}\:(\mathrm{exp}\left\{K_{L'}t\right\}-1)\sup_{0\leq\tau\leq t}\|\hat \psi_1(\tau)-\hat \psi_2(\tau)\|_{W^{1,\infty}(\R)}\non
	&\quad + C_{E',L'}\:(\mathrm{exp}\left\{K_{L'}t\right\}-1)\sup_{0\leq\tau\leq t}\|\hat \psi_1(\tau)-\hat \psi_2(\tau)\|_{L^\infty(\R)}.
\end{align}
That is, we have that
\begin{equation}
	\|U_1(t)-U_2(t)\|_{W^{1,\infty}(\R)} \leq C_{L',E'}(\mathrm{exp}\{k_{L'}t\}-1)\|\hat U_1-\hat U_2\|_{W^{1,\infty}(\R)},
\end{equation}
and hence for $T$ small enough we have that 
\begin{equation}
	\sup_{0\leq t\leq T}\|U_1(t)-U_2(t)\|_{W^{1,\infty}(\R)} < \sup_{0\leq t\leq T}\|\hat U_1(t)-\hat U_2(t)\|_{W^{1,\infty}(\R)}.
\end{equation}
Since $U$ is continuously differentiable, we can do similar estimates for $\sup_{0\leq t\leq T}\|U_{1,t}(t)-U_{2,t}(t)\|_{L^\infty(\R)}$ directly from the linear system. We need  that the energy of the solution is bounded by $E'$. Recall that $E(t)=\int_\R\mc E(t)\:\d x = \int_\R s(t)^2(\phi(t)^2+\omega(t)^2)+v(t)^2 + r(t)^2 + W_0(s(t)) \:\d x$, and thus
\begin{align}
	\frac\partial{\partial t}\mc E - \frac\partial{\partial x}c(\hat\psi)\left(s^2\phi\omega+vr\right) &= s\phi^2\hat v+v\hat s\hat\phi^2-\frac2{\hat s}s^2\phi\hat\phi\hat v\non
	&\quad + s\omega^2\hat v - v\hat s\hat\omega^2+ W_0'(s)\hat v-W_0'(\hat s)v\non
	&\quad +\frac{c'(\hat\psi)}{c(\hat\psi)}\left(s^2\omega\hat\omega(\hat\phi-\phi) +\hat r(\hat\phi r-\phi\hat r) + v(\hat\omega\hat r-\omega r)\right)\non
	&\quad +\frac{c'(\hat\psi)}{c(\hat\psi)}2s\phi\left(\frac s{\hat s}\hat\omega\hat r-\omega c(\hat\psi)s_x\right).
\end{align}
Integration yields
\begin{align}
	\frac\d{\d t}E(t) &\leq 2{L'}^2E(t)+ 2{L'}^2E' +2{L'}^2\sqrt{E'}\sqrt{E(t)} +\|W_0'(s)\hat v\|_1 + \|W_0'(\hat s)v\|_1\non
	&\quad + K\left({L'}^3\sqrt{E(t)}\sqrt{E'} +  4{L'}\sqrt{E(t)}\sqrt{E'}\right)\non
	&\quad + K\left(2{L'}^3\sqrt{E(t)}\sqrt{E'}+2E(t)\|c\|_\infty L'\right)
\end{align}
By Definition \ref{def:W_0} we have that
\begin{align}
	\|W_0'(s)\hat v\|_1 &= \left\|W_0'(s)\right\|_2\left\|\hat v\right\|_2 \non
	&\leq \sqrt{k_{E(t)}\|W_0(s)\|_1}\sqrt{E'}\non
	&\leq \sqrt{k_{E'}E(t)}\sqrt{E'}, 
\end{align}
and hence for $t$ small enough, depending on $E'$, $L'$, $W_0$, and $c$ only, $E(t)\leq E'$.

Hence the sequence $U_n$, where $U_{n+1}$ is the solution to the linear equation with $U_n$ inserted, is a Cauchy sequence in $\D_{T,E',L'}$, and hence there is a solution of the equations. Moreover, the limit will satisfy $\mc E_t-(c(\psi)^2\mc F)_x = 0$, and thus conserves energy.
\end{proof}

\end{document}